\theoremstyle{plain}
\newtheorem{theorem}{Theorem}[section]
\newtheorem{conjecture}[theorem]{Conjecture}
\theoremstyle{remark}
\def\N{\mathbb{N}}
\def\C{\mathcal}
\DeclareMathOperator\ind{ind}
\let\originalleft\left
\let\originalright\right
\renewcommand{\left}{\mathopen{}\mathclose\bgroup\originalleft}
\renewcommand{\right}{\aftergroup\egroup\originalright}
\def\imod#1{\allowbreak\mkern10mu({\operator@font mod}\,\,#1)}
\begin{document}

\title{Counting independent sets in regular hypergraphs}

\author{J\'{o}zsef Balogh}
\address{Department of Mathematics, University of Illinois, 1409 W. Green Street, Urbana, IL 61801}
\email{jobal@illinois.edu}

\author{B\'{e}la Bollob\'{a}s}
\address{Department of Pure Mathematics and Mathematical Statistics, University of Cambridge, Wilberforce Road, Cambridge CB3\thinspace0WB, UK, {\em and\/}
Department of Mathematical Sciences, University of Memphis, Memphis TN 38152, USA}
\email{b.bollobas@dpmms.cam.ac.uk}

\author{Bhargav Narayanan}
\address{Department of Mathematics, Rutgers University, Piscataway, NJ 08854, USA}
\email{narayanan@math.rutgers.edu}

\date{29 January 2020}
\subjclass[2010]{Primary 05A16; Secondary 05C65, 05C35}

\begin{abstract}
Amongst $d$-regular $r$-uniform hypergraphs on $n$ vertices, which ones have the largest number of independent sets? While the analogous problem for graphs (originally raised by Granville) is now well-understood, it is not even clear what the correct general conjecture ought to be; our goal here is propose such a generalisation. Lending credence to our conjecture, we verify it within the class of `quasi-bipartite' hypergraphs (a generalisation of bipartite graphs that seems natural in this context) by adopting the entropic approach of Kahn.
\end{abstract}

\maketitle
\section{Introduction}
This paper concerns the hypergraph analogue of an old (and now resolved) graph-theoretic problem of Granville (see~\citep{Alon}). Granville raised the following problem: which $d$-regular graphs on $n$ vertices have the maximum number of independent sets? This problem was also considered by Kahn~\citep{Kahn} in the context of the hard-core model, and a complete answer is now available owing to the work of Kahn~\citep{Kahn} and Zhao~\citep{Zhao}: the extremal graphs are precisely those consisting of disjoint copies of the complete bipartite graph $K_{d,d}$. By now, much more is known; see~\citep{Perkins, Reverse, Survey} for a small sample of the literature. 

Here, we shall focus on the more general problem of maximising the number of independent sets in  $r$-uniform hypergraphs (or $r$-graphs, for short). While this is a natural problem, we emphasise that it is not even apparent what the correct conjectural analogue of the complete bipartite graph is; our aim in this short note is to remedy this situation. A brief word about notation: a subset of the vertex set of an $r$-graph is \emph{independent} if it induces no edges, the \emph{degree} of a vertex is the number of edges containing it, and an $r$-graph is \emph{$d$-regular} if each of its vertices has degree $d$.

The following construction will play a fundamental role in our arguments. For $r \ge 2$ and $d \in\N$, let $\C{H}^r_d$ be the $d$-regular $r$-partite $r$-graph on $rd$ vertices whose $d^2$ edges are as follows: mark a subset of the vertex set of order $d$, partition the remaining $(r-1)d$ vertices into $d$ sets of $r-1$ vertices each, thereby obtaining an $(r-1)$-uniform matching, and then include in the edge set of $\C{H}^r_d$ each $r$-set consisting of a marked vertex and a matching edge. For example, $\C{H}^2_d$ is the complete bipartite graph $K_{d,d}$, and $\C{H}^3_d$ is the set of triangles in the graph on $3d$ vertices where $d$ vertices are each joined to both ends of all the edges of a matching covering the other $2d$ vertices. 

Writing $\ind(\C{G})$ for the number of independent sets in an $r$-graph $\C{G}$, an easy computation tells us that 
\[\ind(\C{H}^r_d) = 2^{(r-1)d} + (2^d-1)(2^{r-1}-1)^{d}.\] 
Our main reason for writing this note is to make the following conjecture.
\begin{conjecture}\label{mainconj}
	For all $r \ge 2$ and $d \in \N$, if $\C{G}$ is a $d$-regular $r$-graph on $n$ vertices, then 
	\[\ind(\C{G}) \le \ind(\C{H}^r_d)^{n/rd}.\]
\end{conjecture}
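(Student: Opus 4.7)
The plan is the Kahn--Zhao blueprint adapted to hypergraphs: first establish Conjecture~\ref{mainconj} within the quasi-bipartite class (by an entropy argument applied to a uniformly random independent set $I$, with the one-sided bipartition into a marked set $A$ and the rest $B$ enabling a clean decomposition $H(X_V) = H(X_A) + H(X_B \mid X_A)$ and subsequent application of Shearer-type inequalities), and then reduce the general case to the quasi-bipartite one through a suitable cover. The first step is the paper's contribution, so the proposal concentrates on the reduction.

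\textbf{The cover.} Fix an arbitrary total order on $V(\mathcal{G})$, set $V(\mathcal{G}^*) = V(\mathcal{G}) \times \Z/r\Z$, and for each edge $e = \{v_1 < \cdots < v_r\}$ of $\mathcal{G}$ and each shift $i \in \Z/r\Z$, declare $e^*_i = \{(v_j,\, i + j - 1) : 1 \le j \le r\}$ (layer indices reduced mod $r$) to be an edge of $\mathcal{G}^*$. A direct check shows that $\mathcal{G}^*$ is $d$-regular on $rn$ vertices, and is quasi-bipartite: for any layer $\ell \in \Z/r\Z$, each edge $e^*_i$ meets $V \times \{\ell\}$ in exactly one vertex, namely the unique $j$ with $i + j - 1 \equiv \ell$. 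For $r = 2$, this is precisely Zhao's bipartite double cover. If one can then prove the Zhao-type inequality
\begin{equation}
\ind(\mathcal{G}^*) \ge \ind(\mathcal{G})^r, \label{eq:zhao-hyp}
\end{equation}
the quasi-bipartite case applied to $\mathcal{G}^*$ gives $\ind(\mathcal{G})^r \le \ind(\mathcal{G}^*) \le \ind(\mathcal{H}^r_d)^{rn/(rd)} = \ind(\mathcal{H}^r_d)^{n/d}$, and the conjecture follows after taking $r$-th roots.

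\textbf{The main obstacle.} The crux is \eqref{eq:zhao-hyp}. Small cases behave correctly: if $\mathcal{G}$ is a single edge then $\mathcal{G}^*$ is a disjoint union of $r$ copies of that edge, giving equality; for $r = 2$ the inequality is Zhao's theorem. However, direct combinatorial approaches fail for $r \ge 3$: the naive product map $(I_0,\ldots,I_{r-1}) \mapsto \bigcup_\ell I_\ell \times \{\ell\}$ does not preserve independence, as already illustrated when $\mathcal{G}$ is the single edge $\{v_1,v_2,v_3\}$ and $I_\ell = \{v_{\ell+1}\}$, in which case the image is precisely the shift-$0$ edge of $\mathcal{G}^*$. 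I would attempt \eqref{eq:zhao-hyp} by entropy: let $J$ be a uniformly random independent set of $\mathcal{G}^*$ with layer marginals $J_\ell = \{v : (v,\ell) \in J\}$, exploit the cyclic $\Z/r\Z$-symmetry of $\mathcal{G}^*$ to symmetrize the joint law of $(J_0,\ldots,J_{r-1})$, and then try to compare $H(J) = \sum_\ell H(J_\ell \mid J_{<\ell})$ against $r \cdot \log\ind(\mathcal{G})$ by arguing (after averaging over shifts and perhaps conditioning on the other layers) that the distribution of $J_0$ is supported on independent sets of $\mathcal{G}$. The layers of $\mathcal{G}^*$ interact in the twisted way dictated by the cyclic shifts, so the marginals need not individually be independent sets of $\mathcal{G}$, and converting this twisted dependence into the clean multiplicative bound \eqref{eq:zhao-hyp} is the principal difficulty; it is exactly the gap separating the hypergraph problem from the graph one, and overcoming it (whether through the entropic route above, a genuine combinatorial injection generalizing Zhao's swap, or a cleverer choice of cover) appears to be where the real work lies.
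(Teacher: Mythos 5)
The statement you are addressing is a conjecture: the paper does not prove it, and neither do you --- you openly leave the key inequality $\ind(\C{G}^*) \ge \ind(\C{G})^r$ unproven, so what you have is a proposed reduction of the general case to the quasi-bipartite case (the paper's actual theorem, which you correctly take as your starting point), not a proof. That acknowledged gap is real, and it is essentially the obstacle the paper itself describes in its concluding section: any Zhao-type lift would have to send $\C{H}^r_d$ to disjoint copies of $\C{H}^r_d$, and the failure of vertex-transitivity for $r \ge 3$ makes both the construction of such a lift and any analogue of Zhao's swap genuinely problematic.

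There is, however, a second gap that you assert away with ``a direct check'': your cyclic cover $\C{G}^*$ is \emph{not} quasi-bipartite in general. Taking $A = V \times \{\ell\}$ does give Condition~\ref{condA}, but Condition~\ref{condB} --- that the link of each vertex of $A$ be a matching --- fails as soon as some pair of vertices of $\C{G}$ has codegree larger than $r-1$. Indeed, if $v$ and $w$ lie in two edges $e$ and $e'$ in which $w$ occupies the same position relative to $v$ modulo $r$ (forced by pigeonhole once the codegree of $\{v,w\}$ exceeds $r-1$, whatever total order or per-edge labelling you fix), then the two link-edges of $(v,\ell)$ coming from $e$ and $e'$ both contain the same lifted copy of $w$, so the link is not a matching. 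This already happens for the extremal example itself: in $\C{H}^3_d$ with $d \ge 3$, each matching pair $\{b,c\}$ lies in $d$ edges (one for each marked vertex), so the link of $(b,\ell)$ in the cover is not a matching. Consequently, even if you could establish $\ind(\C{G}^*) \ge \ind(\C{G})^r$, the quasi-bipartite theorem would not apply to $\C{G}^*$ and the reduction would not close. Any workable cover must control codegrees --- a substantially stronger demand than the one-vertex-per-layer property you actually verify, and one reason the known progress beyond this paper is confined to settings such as linear hypergraphs.
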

First, by way of orientation, let us mention that when $r \ge 3$, a disjoint union of copies of $\C{H}^r_d$ has strictly more independent sets than a comparable disjoint union of copies of a complete $r$-partite $r$-graph (the natural first guess), and of an $r$-partite transversal design (the natural guess within the class of linear $r$-graphs); of course, strictly speaking, we mean this when the numerics allow the latter two constructions (i.e., when $d = t^{r-1}$ for some $t \in \N$ for complete $r$-partite $r$-graphs, and $d$ sufficiently large  for $r$-partite transversal designs). Second, as remarked earlier, Conjecture~\ref{mainconj} for $r = 2$ is the aforementioned Kahn--Zhao theorem, but we are unable to verify it for $r\ge3$; nevertheless, in the spirit of Kahn~\citep{Kahn}, we shall verify our conjecture for `quasi-bipartite' $r$-graphs when $r \ge 3$.

It is worth mentioning that there has been some recent (independent) interest around finding a statement in the spirit of Conjecture~\ref{mainconj}; for example, Cohen, Perkins, Sarantis and Tetali~\citep{linear} study an analogue of the problem treated here for regular \emph{linear} $r$-graphs, and raise the question of what one can say about regular $r$-graphs in general.

This paper is organised as follows. In Section~\ref{quasi}, we prove Conjecture~\ref{mainconj} within the class of quasi-bipartite $r$-graphs, and conclude in Section~\ref{conc} by discussing the main obstacles a proof of Conjecture~\ref{mainconj} would have to overcome.

\section{Quasi-bipartite hypergraphs}\label{quasi}
Recall that, in an $r$-graph $\C{G}$, the \emph{link $\C{L}(v)$} of a vertex $v$ is the $(r-1)$-graph whose edges are precisely those sets $S$ such that $S \cup \{v\}$ is an edge of $\C{G}$ (and whose vertex set is precisely the span of these edges). We say that an $r$-graph $\C{G}$ is \emph{quasi-bipartite} if its vertices may be partitioned into two sets $A$ and $B$ such that 
\begin{enumerate}[(I)]
\item\label{condA} every edge of $\C{G}$ intersects $A$ in exactly one vertex, and
\item\label{condB} for each $a\in A$, the link $\C{L}(a)$ of $a$ is a matching.
\end{enumerate} 
Two remarks about this definition are worth recording: first, when $r=2$, this is easily seen to be precisely the definition of a bipartite graph, and second, we note that $\C{H}^r_d$ is quasi-bipartite for all $r\ge 2$ and $d \in \N$. Following Kahn~\citep{Kahn}, we prove Conjecture~\ref{mainconj} for quasi-bipartite hypergraphs.

\begin{theorem}
If $\C{G}$ is a $d$-regular quasi-bipartite $r$-graph on $n$ vertices, then 
\[ \ind(\C{G}) \le \ind(\C{H}^r_d)^{n/rd}. \]
\end{theorem}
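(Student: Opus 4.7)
The plan is to adapt Kahn's entropy method to the quasi-bipartite setting. Let $I$ be a uniformly random independent set of $\C{G}$, write $H$ for Shannon entropy (taken throughout in base~$2$), and put $I_A = I \cap A$, $I_B = I \cap B$. For each $a \in A$, let $V(a) = \bigcup_{e' \in \C{L}(a)} e' \subseteq B$; by condition~(II), $|V(a)| = (r-1)d$, and a short double-counting argument using (I) and $d$-regularity yields $|A| = n/r$ and shows that each $b \in B$ lies in exactly $d$ of the sets $V(a)$. Since $H(I) = \log \ind(\C{G})$, the task reduces to proving $H(I) \le \frac{n}{rd} \log \ind(\C{H}^r_d)$.

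The argument hinges on the chain rule $H(I) = H(I_B) + H(I_A \mid I_B)$, with the two terms handled in complementary ways. For the conditional term, observe that given $I_B = Y$, the set $I_A$ is uniformly distributed on the power set of $\{a \in A : \forall e' \in \C{L}(a),\, e' \not\subseteq Y\}$, so, writing $q_a := \P(\forall e' \in \C{L}(a),\, e' \not\subseteq I)$, one obtains the \emph{exact} identity $H(I_A \mid I_B) = \sum_{a \in A} q_a$. For $H(I_B)$, I will apply Shearer's inequality to the $d$-cover $\{V(a)\}_{a \in A}$ of $B$, giving $H(I_B) \le \frac{1}{d}\sum_{a \in A} H(I_{V(a)})$. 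The last ingredient is to split $2^{V(a)}$ into $Q_a := \{S \subseteq V(a) : \forall e' \in \C{L}(a),\, e' \not\subseteq S\}$, which has cardinality $(2^{r-1}-1)^d$ since $\C{L}(a)$ is an $(r-1)$-matching of $d$ edges, and its complement; the standard two-stage entropy bound then yields
\[
H(I_{V(a)}) \le H_2(q_a) + q_a \log (2^{r-1}-1)^d + (1-q_a) \log\!\left(2^{(r-1)d} - (2^{r-1}-1)^d\right),
\]
using $\P(I_{V(a)} \in Q_a) = q_a$ (with $H_2$ denoting binary entropy).

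Combining these three estimates, $d \cdot H(I) \le \sum_{a \in A}\bigl[H_2(q_a) + q_a \log a^* + (1-q_a) \log b^*\bigr]$ with $a^* = 2^d(2^{r-1}-1)^d$ and $b^* = 2^{(r-1)d} - (2^{r-1}-1)^d$. The elementary identity $\max_{q \in [0,1]} \bigl[H_2(q) + q \log a + (1-q) \log b\bigr] = \log(a+b)$ then finishes the calculation, since $a^* + b^* = 2^{(r-1)d} + (2^d-1)(2^{r-1}-1)^d = \ind(\C{H}^r_d)$; summing over the $|A| = n/r$ indices gives $H(I) \le \frac{n}{rd} \log \ind(\C{H}^r_d)$, as required.

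The main obstacle, and the reason for the precise shape of this decomposition, is that several natural simplifications yield inequalities that are not tight at $\C{H}^r_d$ and so fail the required bound. For instance, replacing the exact identity $H(I_A \mid I_B) = \sum_a q_a$ by the naive Shannon bound $H(I_A) \le \sum_a H_2(p_a)$, and replacing $\log|Q_a^c|$ by the crude $(r-1)d$, gives a per-vertex optimum of $\log(2^r - 1)$, which exceeds $\frac{1}{d}\log \ind(\C{H}^r_d) \to \log(2^r - 2)$ as $d \to \infty$. The guiding principle is therefore to route every intermediate inequality through quantities saturated by $\C{H}^r_d$; this forces both the conditioning on $I_B$ first (so the $A$-entropy becomes exact) and the retention of the exact cardinality of $Q_a^c$ throughout.
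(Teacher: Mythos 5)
Your proof is correct and follows essentially the same route as the paper's: the chain rule $H(I)=H(I_B)+H(I_A\mid I_B)$, Shearer's inequality applied to the $d$-cover of $B$ by the link vertex sets, and a per-link Jensen/optimization step whose optimum is exactly $\log\ind(\C{H}^r_d)$. The paper runs that last optimization over the full distribution of $X_{V(\C{L}(a))}$ via $\sum_I p(I)\log\left(\lambda(I)^d/p(I)\right)\le\log\sum_I\lambda(I)^d$ rather than over your two-class coarsening in $q_a$, but this is the same inequality applied at a different granularity and yields the identical bound.
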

\begin{proof}
Denote by $A\cup B$ the vertex partition associated with $\C{G}$. Let $X$ be the characteristic vector of a randomly chosen independent set of $\C{G}$. For a set $S$ of vertices, we write $X_S$ for the subvector of $X$ indexed by the vertices in $S$, and abbreviate $X_{\{v\}}$ by $X_v$. Now, representing $X=(X_A, X_B)$ and writing $H(X)$ for the entropy of $X$, we note that
\[ \log (\ind(\C{G})) = H(X) = H\left( X_B\right) + H\left(X_A\,|\,X_B\right) ,\]
and observe that $\{V(\C{L}(a)):a\in A\}$ is a $d$-covering of $B$; this fact follows from Condition~\ref{condB}. Hence, by Shearer's lemma (see~\citep{cgfs}), we have
\[H\left(X_B\right) \le \frac{1}{d}\sum_{a\in A} H\left(X_{V(\C{L}(a))}\right).\]
Next, note also that
\[ H\left(X_A\,|\,X_B\right)\le \sum_{a\in A} H\left(X_{a}\,|\,X_B\right) = \sum_{a\in A} H\left(X_{a} \,|\, X_{V(\C{L}(a))}\right).\]
Finally, putting these estimates together, we conclude that
\[ H(X) \le \frac{1}{d} \left(\sum_{a\in A}H\left(X_{V(\C{L}(a))}\right)+  d\cdot H\left(X_{a} \,|\, X_{V(\C{L}(a))}\right)\right).\]

Now, fix an arbitrary $a\in A$ and consider a set $I \subset V(\C{L}(a))$ that is independent in $\C{G}$. We write $p(I)$ for the probability that $X_{V(\C{L}(a))} = I$, and $\lambda(I)$ for the number of ways that $a$ can be added to $I$ (which is either $1$ or $2$) whilst preserving independence in $\C{G}$. In this language, we have
\[ H\left(X_{V(\C{L}(a))}\right)+  d\cdot H\left(X_{a} \,|\, X_{V(\C{L}(a))}\right)= \sum_I  p(I) \left( \log\left(\frac{1}{p(I)}\right) + d \cdot H\left(X_a\,|\,\{X_{V(\C{L}(a))}= I\}\right)\right).\]
Since $H(X_a \,|\,  \{X_{V(\C{L}(a))}= I\})\le \log(\lambda(I))$, the right hand side of the above inequality is bounded above by 
\[\sum_I  p(I) \left( \log\frac{1}{p(I)} + d \cdot \log(\lambda(I))  \right) =  \sum_I  p(I) \log \left( \frac{\lambda(I)^d}{p(I)} \right)\le \log\left(\sum_I \lambda(I)^d \right),\]
where the last inequality is a consequence of Jensen's inequality. Noting that $I$ ranges over the subsets of $V(\C{L}(a))$ that are independent in $\C{G}$, we have
\[\sum_I \lambda(I)^d \le 2^{|V(\C{L}(a))|} + (2^d-1) \ind(\C{L}(a)) \le 2^{(r-1)d} + (2^d-1)(2^{r-1}-1)^d =  \ind(\C{H}^r_d).\]
For the first inequality above, notice that each subset of $V(\C{L}(a))$ contributes at most $1$ to the sum on the left, unless it happens to be independent in $\C{L}(a)$, in which case it contributes an additional $2^d-1$. To see why the second inequality above holds, first observe that we trivially have $|V(\C{L}(a)) |\le (r-1)d$. Then, observe that we may bound $\ind(\C{L}(a))$ again using the subadditivity of entropy (the trivial case of Shearer's lemma): writing $S_1, S_2, \dots, S_d$ for the $d$ edges of the $(r-1)$-graph $\C{L}(a)$, consideration of the characteristic vector $Y$ of a randomly chosen independent set of $\C{L}(a)$ leads us to conclude that $H(Y) \le \sum_{i=1}^d H(Y_{S_i})$, and observing that $H(Y_{S_i}) \le \log(2^{r-1} -1)$ for each $1\le i \le d$ yields the required bound.

Finally, putting these estimates together, and using the fact that $|A| = n/r$ since $\C{G}$ is $d$-regular, allows us to conclude that
\[H(X)\le \frac{n}{rd} \log (\ind(\C{H}^r_d)),\]
and the result follows.
\end{proof}

\section{Conclusion}\label{conc}
There seem to be two major obstacles in adapting our arguments here for quasi-bipartite $r$-graphs to deal with general $r$-graphs; we outline these below briefly.

First, while any entropic approach would seem to demand something like Condition~\ref{condA}, Condition~\ref{condB} appears to be an artefact of our proof; this latter condition allows us to apply Shearer's inequality, and while Shearer's inequality itself cannot be strengthened in its full generality to deal with non-uniform covers (in that entropy may concentrate on the vertices covered the fewest number of times), some variant of Shearer's inequality tailored to the situation at hand might nevertheless be an ingredient that we are presently missing.

Second, a significant difference between $r = 2$ and $r \ge 3$ is that $\C{H}^r_d$ is no longer vertex transitive when $r \ge 3$. Any analogue of the swapping trick of Zhao~\citep{Zhao} for $r$-graphs with $r \ge 3$ would necessarily have to account for this lack of symmetry; in particular, the appropriate `lift' would have to map $\C{H}^r_d$ to disjoint copies of $\C{H}^r_d$ and, in the absence of symmetry, this seems difficult to accomplish .

\section*{Acknowledgements}
The first author was partially supported by NSF grant DMS-1764123, an Arnold O. Beckman Research Award (UIUC Campus Research Board 18132) and the Langan Scholar Fund (UIUC), the second author was supported by NSF grant DMS-1855745, and the third author wishes to acknowledge support from NSF grant DMS-1800521. 

Part of this work was done while the first author was a visiting fellow commoner at Trinity College, Cambridge; we thank Trinity College for their hospitality.

\bibliographystyle{amsplain}
\bibliography{ind_hyp}

\end{document}